\newtheorem{Theo}{Theorem}[section]
\newtheorem{Prop}[Theo]{Proposition}
\newtheorem{Coro}[Theo]{Corollary}
\newtheorem{Lemm}[Theo]{Lemma}
\newtheorem{Defi}[Theo]{Definition}
\newtheorem{Rema}[Theo]{Remark}
\newcommand{\diam}{\operatorname{diam}}
\newcommand{\conv}{\operatorname{co}}
\newcommand{\aconv}{\operatorname{aco}}
\newcommand{\spn}{\operatorname{span}}
\begin{document}

\title{A Bourgain-like property of Banach spaces \linebreak with no copies of $c_0$}

\author{A. P\'erez}
\address{Departamento de Matem\'{a}ticas, Universidad de Murcia,
30100 Espinardo (Murcia), Spain} \email{antonio.perez7@um.es}

\author{M. Raja}
\address{Departamento de Matem\'{a}ticas, Universidad de Murcia,
30100 Espinardo (Murcia), Spain} \email{matias@um.es}

\thanks{A. P\'erez and M. Raja are partially supported by the  
MINECO/FEDER project  {MTM2014-57838-C2-1-P}. The first author is also supported by a PhD fellowship of La Caixa Foundation.}

\maketitle

%%%%%%%%%%%%%%%%%%%%%%%%%%%%%%%%%%%%%%%%%%%%%%%%%%%%%%%%%%%%%%%%%%%%%%%%%%%%%%%%%%%%%%%%%%%%%%%

\begin{abstract}
We give a characterization of the existence of copies of $c_{0}$ in Banach spaces in terms of indexes. As an application, we deduce new proofs of James Distortion theorem and Bessaga-Pe\l{}czynski theorem about weakly unconditionally Cauchy series.
\end{abstract}

\section{Introduction}

The aim of this paper is to study the existence of copies of $c_{0}$ in Banach spaces in terms of indexes and by purely geometrical methods. Our motivation for this is the beautiful characterization given by Bourgain \cite[Lemma 3.7, p. 39]{BourgainRNP} of Banach spaces not containing $\ell^{1}$, as those satisfying that for every bounded subset $C$ of $X^{\ast}$ and each $\epsilon > 0$ there exist relatively weak$^{\ast}$-open subsets $U_{1}, \ldots, U_{m}$ of $C$ such that $\frac{1}{m}(U_{1} + \ldots + U_{m})$ has diameter less than $\epsilon$. Several results concerning this type of spaces follow from this, like the fact that their dual unit ball $(B_{X^{\ast}}, \omega^{\ast})$ is convex block compact \cite[Proposition 3.11, p. 43]{BourgainRNP}.

We prove that a Banach space $X$ does not contain an isomorphic copy of $c_{0}$ if and only if for every bounded subset $A$ of $X$ and each $\epsilon > 0$ there are $x_{1}, \ldots, x_{m}$ in $A$ such that $\bigcap_{j=1}^{m}{(A - x_{j}) \cap (x_{j} - A)}$ has diameter less than $\epsilon$. Actually, we give a quantitative version of this fact. We first associate to any bounded set $A \subset X$ a sequence of indexes $\delta_{m}(A)$ ($m \geq 0$), being $\delta_{m}(A)$ half of the infimum of all diameters of sets $\bigcap_{j=1}^{m}{(A - x_{j}) \cap (x_{j} - A)}$ where $x_{1}, \ldots, x_{m} \in A$. Then, we prove in Theorem \ref{theo:c0Copies} that for each $\epsilon > 0$ we can find a sequence $(x_{n})_{n \in \mathbb{N}}$ in the absolute convex hull of $A$ such that
\begin{equation} 
\label{equa:equivalentBasis}
\left( \delta_{2^{N}}(A) - \epsilon \right) \cdot \max_{1 \leq n \leq N}{|\lambda_{n}|} \leq \left\| \sum_{n=1}^{N}{\lambda_{n} x_{n}} \right\| \leq  \delta_{0}(A) \cdot \max_{1 \leq n \leq N}{|\lambda_{n}|} 
\end{equation}
for every $\lambda_{1}, \ldots \lambda_{N} \in \mathbb{R}$ and $N \in \mathbb{N}$. 

From the previous result we deduce the characterization of Banach spaces containing an isomorphic copy of $c_{0}$ mentioned above (Theorem \ref{theo:c0chacterization}), as well as the known theorems of James (Theorem \ref{theo:JamesDistortion}) and Bessaga-Pe\l{}czynski (Theorem \ref{theo:BessagaPelczynski}) without using basic sequences. 

Our notation is standard and follows \cite{Fabian}. We denote by $X$ a real Banach space with the norm $\| \cdot \|$. Its topological dual will be denoted by $X^{\ast}$, and for any $x^{\ast} \in X^{\ast}$ and $x \in X$ the evaluation of $x^{\ast}$ at $x$ is written as $x^{\ast}(x) = \langle x^{\ast}, x \rangle = \langle x, x^{\ast} \rangle$. The closed unit ball (resp. unit sphere) of $X$ is denoted by $B_{X}$ (resp. $S_{X}$). If $D \subset X$ then we write $\conv{(D)}$, $\aconv{(D)}$ and $\spn{(D)}$ to denote the convex hull, the absolutely convex hull and the linear hull of $D$. The supremum of $x^{\ast} \in X$ on $D$ is denoted by $\sup{(x^{\ast}, D)}$. A \emph{slice} of $D$ is a set of the form $S(D, x^{\ast}, \delta):=\{ x \in D: x^{\ast}(x) > \sup{(x^{\ast}, D)} - \delta \}$ for some $x^{\ast} \in X^{\ast}$ and $\delta > 0$. Recall that the \emph{diameter} of $D$ is defined as
$\diam{(D)} := \sup{\{ \| x- y\|\colon x,y \in D \}}$. 

\section{Indexes of symmetrization}

\begin{Defi}
Given $A \subset X$ bounded, the symmetrized of $A$ with respect to $x_{1}, \ldots, x_{N} \in A$ is defined as
$\bigcap_{n=1}^{N}{(A - x_{n})\cap (x_{n} - A)}$. 
For each $N \in \mathbb{N}$ we also define:
\[
\Delta_{N}(A) := \left\{ \mbox{$\bigcap_{n=1}^{N}{(A - x_{n})\cap (x_{n} - A)}$}\colon x_{1}, \ldots, x_{N} \in A \right\}. 
\]
\[
 \delta_{0}(A) := \diam(A)/2, \hspace{3mm} \delta_{N}(A) := \inf{\left\{ \delta_{0}(D): D \in \Delta_{N}(A) \right\}}. 
 \]
 \end{Defi}
It is clear from the definition that $\{\Delta_{N}(A)\colon N \in \mathbb{N}\}$ is an increasing sequence of sets, and hence $\{ \delta_{N}(A)\colon N \in \mathbb{N} \}$ is decreasing. We will write $\delta_{\infty}(A) := \lim_{N}{\delta_{N}(A)}$. Let us point out that if $x \in A$, then $d \in (A- x) \cap (x - A)$ is equivalent to $x \pm d \in A$. With this in mind, the following (useful) observations are direct:
\begin{enumerate}
\item[(I)] If $D \in \Delta_{N}(A)$ is the symmetrized of $A$ with respect to $x_{1}, \ldots, x_{N} \in A$, then for every $d \in D$ the set $(D- d) \cap (d -D)$ is the symmetrized of $A$ with respect to $x_{1} \pm d, \ldots, x_{N} \pm d \in A$. In particular, $(D- d) \cap (d -D) \in \Delta_{2N}(A)$.
\item[(II)] Given $x^{\ast} \in X^{\ast}$, $\delta > 0$ and $x \in S(A, x^{\ast}, \delta)$, every $d \in D:=(A - x) \cap (x- A)$ satisfies $|x^{\ast}(x)| + |x^{\ast}(d)| < \sup(x^{\ast}, A)$, so that $|x^{\ast}(d)| < \delta$. In particular, $ x \pm D \subset S(A,x^{\ast},2\delta)$.
\end{enumerate}
Recall that the \emph{Kuratowski measure of non-compactness} of a set $S \subset X$ is
\[ \alpha(S) := \inf{\{ \epsilon > 0: \hbox{ there are finitely many balls of radius $\epsilon$ which cover $S$} \}}. \]

\begin{Lemm}
\label{LEMA:KuratowskiSizeSym}
If $A \subset X$ is bounded and $D \in \Delta_{N}(A)$, then $\alpha(D) \geq \delta_{2N}(A)$. 
\end{Lemm}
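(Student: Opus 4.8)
The plan is to pass to $K := \overline{\conv}(D)$ and prove that \emph{every} slice of $K$ has diameter at least $2\delta_{2N}(A)$; the bound on $\alpha(D)$ will then follow from the invariance $\alpha(D)=\alpha(K)$ together with a Bourgain-type lemma that extracts small slices from small measures of non-compactness.

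First I would establish the slice estimate, which is the concrete, self-contained part of the argument and is really just observation (II) transplanted into $K$. Fix $x^{\ast}\in X^{\ast}$ and a slice $S=S(K,x^{\ast},\delta)$, and write $M=\sup(x^{\ast},K)=\sup(x^{\ast},D)$. Given $0<\eta\le\delta/2$, choose $d\in D$ with $x^{\ast}(d)>M-\eta$, so $d\in S$. By observation (I) the set $D_{d}:=(D-d)\cap(d-D)$ lies in $\Delta_{2N}(A)$, hence $\diam(D_{d})\ge 2\delta_{2N}(A)$; since $D_{d}$ is symmetric and contains $0$, we have $\diam(D_{d})=2\sup_{e\in D_{d}}\|e\|$, so there is $e$ with $d\pm e\in D\subseteq K$ and $\|e\|$ as close to $\delta_{2N}(A)$ as we wish. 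From $d\pm e\in K$ I get $x^{\ast}(d)\pm x^{\ast}(e)\le M$, whence $|x^{\ast}(e)|\le M-x^{\ast}(d)<\eta$, and therefore $x^{\ast}(d\pm e)>M-2\eta\ge M-\delta$, i.e. both $d+e$ and $d-e$ belong to $S$. As $\|(d+e)-(d-e)\|=2\|e\|$ can be taken arbitrarily close to $2\delta_{2N}(A)$, this forces $\diam(S)\ge 2\delta_{2N}(A)$.

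It remains to turn the slice estimate into a lower bound for $\alpha$. Since $\alpha$ is unchanged under passing to the closed convex hull, $\alpha(D)=\alpha(K)$, and it suffices to exclude $\alpha(K)<\delta_{2N}(A)$. If that held, I could cover $K$ by finitely many balls of some radius $s<\delta_{2N}(A)$, and then the sets $C_{i}:=K\cap\overline{B}(c_{i},s)$ would be finitely many closed convex subsets covering $K$, each with $\diam(C_{i})\le 2s<2\delta_{2N}(A)$. The crux is the Bourgain-type selection lemma: whenever a closed bounded convex set is covered by finitely many closed convex sets, one of them contains a slice of $K$. Applying it produces a slice of $K$ of diameter at most $2s<2\delta_{2N}(A)$, contradicting the estimate of the previous paragraph; hence $\alpha(D)=\alpha(K)\ge\delta_{2N}(A)$.

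The main obstacle is precisely this selection lemma. I would prove it by induction on the number $n$ of convex sets: writing the cover as $K=\bigcup_{i=1}^{n}C_{i}$ with $C_{n}\ne K$, Hahn--Banach separates a point $x_{0}\in K\setminus C_{n}$ from $C_{n}$ by some $x^{\ast}$ and a level $\beta$, so the cap $K_{1}=\{x\in K:x^{\ast}(x)\ge\beta\}$ is a closed bounded convex set disjoint from $C_{n}$ and covered by the remaining $n-1$ sets; the inductive hypothesis then places a slice of $K_{1}$ inside some $C_{i_{0}}$. The delicate point, which I would take from the literature (the standard fact that a slice of a cap contains a slice, obtained by testing with the functionals $y^{\ast}+\lambda x^{\ast}$ for large $\lambda$), is that a slice of the cap $K_{1}$ contains a genuine slice of $K$; granting it, the induction closes and the lemma is proved.
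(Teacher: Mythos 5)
Your slice estimate for $K=\overline{\conv}(D)$ is correct and is essentially observations (I) and (II) of the paper transplanted into the convex hull: at a point $d\in D$ nearly attaining $\sup(x^{\ast},D)$ the set $D_{d}=(D-d)\cap(d-D)$ belongs to $\Delta_{2N}(A)$, is symmetric about the origin, and satisfies $|x^{\ast}(e)|\leq\sup(x^{\ast},D)-x^{\ast}(d)$ for every $e\in D_{d}$, so $d\pm e$ stay in the slice. Reducing the lemma to this estimate via a Bourgain--Namioka selection principle is also a legitimate strategy; the paper's own Remark after the lemma points out exactly this route through the ``Superlemma''. (Your version additionally invokes Darbo's theorem that $\alpha$ is invariant under closed convex hulls; that is classical and citable, though you could avoid it by covering $K$ with the sets $\overline{\conv}\bigl(D\cap\overline{B}(c_{i},s)\bigr)$, whose closed convex hull is all of $K$, rather than with $K\cap\overline{B}(c_{i},s)$.)

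The genuine gap is in your proof of the selection lemma. The step you flag as delicate --- that a slice of the cap $K_{1}=\{x\in K:x^{\ast}(x)\geq\beta\}$ contains a slice of $K$ --- is simply false, and no choice of functionals $y^{\ast}+\lambda x^{\ast}$ repairs it. Take $K=[0,1]^{2}$, let $x^{\ast}$ be the first coordinate and $\beta=1/2$, so $K_{1}=[1/2,1]\times[0,1]$; the slice of $K_{1}$ cut by $-x^{\ast}$ is the strip $[1/2,1/2+\delta)\times[0,1]$, which contains no slice of $K$, since every slice of a compact convex set contains an extreme point and no vertex of the square lies in the strip. Because the inductive hypothesis gives you no control over \emph{which} slice of $K_{1}$ lands in $C_{i_{0}}$, the induction does not close. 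What is true here is genuinely the Superlemma, whose standard proof is a different (and more delicate) separation argument; so you must either cite it outright or argue directly. The paper does the latter and stays self-contained: it orders the pieces so that $D\subset\overline{\conv}(D_{1}\cup\dots\cup D_{m})$ but $D\not\subset\overline{\conv}(D_{1}\cup\dots\cup D_{m-1})$, separates to obtain a slice of $D$ missing the smaller hull, and verifies by hand that a sufficiently thin subslice lies within $\eta(1+\diam D)$ of $D_{m}$, hence has diameter at most $2\varepsilon+2\eta(1+\diam D)$; combined with your (correct) symmetrization estimate inside that slice, this yields $\delta_{2N}(A)<\varepsilon$ without the Superlemma or Darbo's theorem.
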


\begin{proof}
Suppose that $\alpha(D) < \epsilon$, and let $D_{1}, \ldots, D_{n}$ be a finite family of subsets of $D$ whose union is equal to $D$ and such that each $D_{k}$ is contained in a ball of radius less than $\varepsilon$. If $D \subset \overline{\conv}{(D_{1})}$, then $\diam(D) < 2 \epsilon$ and so $\delta_{2N}(A) \leq \delta_{N}(A) \leq \delta_{0}(D) < \epsilon$. Otherwise, we can assume that there is $ 2 \leq m \leq n$ such that
\begin{equation} 
\label{equa:convexHullSeparation}
D \subset \overline{\conv}{\left( D_{1} \cup \ldots \cup D_{m}\right) } \mbox{ \hspace{3mm} and \hspace{3mm} } D \nsubseteq \overline{\conv}{\left( D_{1} \cup \ldots \cup D_{m-1}\right) }. 
\end{equation}
We can take $x_{0}^{\ast} \in S_{X^{\ast}}$ and $\delta > 0$ such that the slice $S(D,x_{0}^\ast, \delta)$ has empty intersection with $\overline{\conv}{\left( B_{1} \cup \ldots \cup B_{m-1}\right) }$. We claim that for every $0 < \eta < 1$ it holds that 
\begin{equation}
\label{equa:claimSlice}
S(D, x_{0}^{\ast}, \eta \delta) \subset D_{m} + \eta (1 + \diam{D}) B_{X}. 
\end{equation}
If $d \in S(D, x_{0}^\ast, \eta \delta)$, then by \eqref{equa:convexHullSeparation} we can find $d':=\lambda d_{m} + (1 - \lambda) c_{m}$ where $0 \leq \lambda \leq 1$, $d_{m} \in D_{m}$ and $c_{m} \in \conv{(D_{1} \cup \ldots \cup D_{m-1})}$ such that $\| d - d' \| < \eta$ and $d' \in S(D, x_{0}^{\ast}, \delta \eta)$. Since $x_{0}^{\ast}(c_{m}) \leq \sup{(x_{0}^{\ast})} - \delta$, we deduce that
\[ \sup(x_{0}^\ast, D) - \eta \delta < x_{0}^{\ast}(d') = \lambda x_{0}^{\ast}(d_{m}) + (1 - \lambda) x_{0}^{\ast}(c_{m}) \leq \sup{(x_{0}^\ast, D)} - (1 - \lambda) \delta. \]
This yields $1 - \lambda < \eta$, and so 
\[ \| d - d_{m}\| \leq \| d - d' \| + \| d' - d_{m} \| < \eta + (1 - \lambda) \| d_{m} - c_{m} \| < \eta (1 + \diam{D}). \]
This proves the claim. By observations (I) and (II), for every $d_{0} \in S(D,x_{0}^{\ast}, \eta \delta/2)$ the set $D_{0} := (D - d_{0}) \cap (d_{0} - D)$ belongs to $\Delta_{2N}(A)$ and $d_{0} \pm D_{0} \subset S(D,x_{0}^{\ast}, \eta \delta)$. Hence, we get by \eqref{equa:claimSlice} that
\[ \delta_{2N}{(A)} \leq \delta_{0}{(D_{0})} \leq \frac{1}{2} \diam{S(D,x_{0}^{\ast}, \eta \delta)} \leq \varepsilon + \eta(1 + \diam{D}). \] 
Since $\eta > 0$ is arbitrary, we conclude that $\delta_{2N}{(A)} < \varepsilon$.
\end{proof}

\begin{Rema}
We are thankful to an anonymous referee for pointing out to us that Lemma \ref{LEMA:KuratowskiSizeSym} can be obtained as a corollary of the so-called ``Superlemma'' of Namioka and Bourgain \cite[Chapter IX, p. 157]{DiestelSeq}. Indeed, under the assumption \eqref{equa:convexHullSeparation} we can apply this result to the closed convex hull of $D$ to obtain a slice $S=S(D, x_{0}^{\ast}, \delta)$ of $D$ with diameter smaller than the diameter of $\overline{\conv}{(D_{m})}$, which is less than $2 \varepsilon$. Taking $d_{0} \in S(D, x_{0}^{\ast}, \delta/2)$ we can argue as in the last part of the proof of Lemma \ref{LEMA:KuratowskiSizeSym} to conclude the result. 
\end{Rema}

%%%%%%%%%%%%%%%%%%%%%%%%%%%%%%%%

\begin{Lemm}
\label{LEMA:orthogonal}
Let $F \subset X$ be a finite-dimensional subspace and $D \subset X$ bounded. If $\alpha(D) > \lambda > 0$, then there exists $x_{0}^{\ast} \in S_{F^{\perp}}$ such that $\sup{(x_{0}^{\ast}, D)} > \lambda$.
\end{Lemm}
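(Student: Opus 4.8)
The plan is to pass to the quotient space $X/F$ and exploit the duality between the annihilator $F^{\perp}$ and $(X/F)^{\ast}$. Let $\pi \colon X \to X/F$ be the quotient map; its adjoint is an isometry of $(X/F)^{\ast}$ onto $F^{\perp}$, and for $x^{\ast} \in F^{\perp}$ the value $x^{\ast}(d)$ depends only on $\pi(d)$. The Hahn--Banach description of the quotient norm gives, for each $d \in X$,
\[ \sup{\{ x^{\ast}(d) \colon x^{\ast} \in S_{F^{\perp}} \}} = \| \pi(d) \|_{X/F} = \operatorname{dist}(d, F). \]
In particular, once we produce a single $d_{0} \in D$ with $\operatorname{dist}(d_{0}, F) > \lambda$, a norming functional $\phi \in S_{(X/F)^{\ast}}$ for $\pi(d_{0})$ yields $x_{0}^{\ast} := \phi \circ \pi \in S_{F^{\perp}}$ with $\sup{(x_{0}^{\ast}, D)} \geq x_{0}^{\ast}(d_{0}) = \|\pi(d_{0})\|_{X/F} > \lambda$. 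So the task reduces to showing that $\operatorname{dist}(\cdot, F)$ is somewhere larger than $\lambda$ on $D$.

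First I would prove this by contraposition: assume $\operatorname{dist}(d, F) \leq \lambda$ for every $d \in D$ and deduce $\alpha(D) \leq \lambda$. Fixing $\epsilon > 0$, each $d \in D$ has some $f_{d} \in F$ with $\|d - f_{d}\| < \lambda + \epsilon$; since $D$ is bounded, the set $\{ f_{d} \colon d \in D \}$ is a bounded subset of the finite-dimensional space $F$, hence totally bounded. Covering it by finitely many $\epsilon$-balls and enlarging their radii, we cover $D$ by finitely many balls of radius $\lambda + 2\epsilon$, so $\alpha(D) \leq \lambda + 2\epsilon$; letting $\epsilon \to 0$ gives $\alpha(D) \leq \lambda$, contradicting the hypothesis. (Note that $\alpha(D) > \lambda > 0$ forces $F \neq X$, so $(X/F)^{\ast}$ and $S_{F^{\perp}}$ are nontrivial.) Thus $\sup_{d \in D}{\operatorname{dist}(d, F)} > \lambda$, and picking $d_{0} \in D$ with $\operatorname{dist}(d_{0}, F) > \lambda$ completes the argument as described above.

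The only step that is not purely formal is the covering argument: it is precisely there that the finite-dimensionality of $F$ and the boundedness of $D$ combine to upgrade the pointwise bound on $\operatorname{dist}(\cdot, F)$ into a global bound on the Kuratowski measure $\alpha(D)$. The remaining ingredients---the isometry $(X/F)^{\ast} \cong F^{\perp}$, the identification of the quotient norm with the distance to $F$, and the selection of a norming functional---are standard consequences of Hahn--Banach and require no real work.
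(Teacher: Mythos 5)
Your proof is correct and follows essentially the same route as the paper: argue by contraposition, use Hahn--Banach duality to translate the bound on $\sup(x^{\ast},D)$ for $x^{\ast}\in S_{F^{\perp}}$ into $\operatorname{dist}(d,F)\leq\lambda$ for all $d\in D$ (the paper writes this as $D\subset F+\lambda B_{X}$, hence $D\subset\mu B_{F}+\lambda B_{X}$), and then invoke compactness of bounded sets in the finite-dimensional $F$ to conclude $\alpha(D)\leq\lambda$. Your quotient-space formulation and explicit $\epsilon$-covering merely spell out in more detail what the paper compresses into two sentences.
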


\begin{proof}
Suppose that every $x_{0}^{\ast} \in S_{F^\perp}$ satisfies that $\sup{(x_{0}^{\ast},D)} \leq \lambda$.  By Hahn-Banach Theorem we have that $D \subset F + \lambda B_{X}$. But then $D \subset \mu B_{F} + \lambda B_{X}$ for some $\mu > 0$, which implies that $\alpha(D) \leq \lambda$ by the compactness of $B_{F}$.
\end{proof}

\begin{Theo}
\label{theo:c0Copies}
Let $A \subset X$ be bounded. For every $\epsilon > 0$ there is a sequence $(x_{n})_{n \in \mathbb{N}}$ in $\aconv{(A)}$ such that
\begin{equation}\label{equa:c0copies}  
(\delta_{2^{N}}(A) - \epsilon) \cdot \max_{1 \leq n \leq N}{|\lambda_{n}|} \leq \left\| \sum_{n=1}^{N}{\lambda_{n} x_{n}} \right\| \leq  \delta_{0}(A) \cdot \max_{1 \leq n \leq N}{|\lambda_{n}|} 
\end{equation}
for every $\lambda_{1}, \ldots, \lambda_{N}$ in $\mathbb{R}$ and $N \in \mathbb{N}$.
\end{Theo}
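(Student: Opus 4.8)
The plan is to build $(x_n)$ by a greedy ``symmetrize and peel'' induction, producing simultaneously a decreasing family of symmetrized sets $D_k$, the vectors, and norming functionals, and then to read off both inequalities from the nesting. Fix auxiliary positive numbers $\eta_k,\delta_k < \epsilon/4$. Choose any $x_0\in A$ and set $D_0:=(A-x_0)\cap(x_0-A)\in\Delta_1(A)$; this set is symmetric, contains $0$, satisfies $\delta_0(D_0)\le\delta_0(A)$ (since $\diam(D_0)\le\diam(A)$), and lies in $\aconv(A)$ because every $d\in D_0$ equals $\tfrac12(x_0+d)-\tfrac12(x_0-d)$, an absolutely convex combination of the two points $x_0\pm d\in A$. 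Inductively, given $D_k\in\Delta_{2^k}(A)$ (symmetric, containing $0$, inside $\aconv(A)$, with $\delta_0(D_k)\le\delta_0(A)$), Lemma \ref{LEMA:KuratowskiSizeSym} gives $\alpha(D_k)\ge\delta_{2^{k+1}}(A)$. If this is $\le\eta_k$ the lower estimate for indices $\ge 2^{k+1}$ will be vacuous and I continue with arbitrary vectors; otherwise Lemma \ref{LEMA:orthogonal}, applied with $F_k:=\spn\{x_1,\dots,x_k\}$, yields $x_{k+1}^{\ast}\in S_{F_k^{\perp}}$ with $\sup(x_{k+1}^{\ast},D_k)>\delta_{2^{k+1}}(A)-\eta_k$. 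I then pick $x_{k+1}\in S(D_k,x_{k+1}^{\ast},\delta_{k+1})$, so that $x_{k+1}^{\ast}(x_{k+1})>\delta_{2^{k+1}}(A)-\eta_k-\delta_{k+1}$, and set $D_{k+1}:=(D_k-x_{k+1})\cap(x_{k+1}-D_k)$. By observation (I), $D_{k+1}\in\Delta_{2^{k+1}}(A)$, and the same centered-difference identity shows it keeps all the structural properties; by observation (II) every $d\in D_{k+1}$ satisfies $|x_{k+1}^{\ast}(d)|<\delta_{k+1}$; finally $x_{k+1}^{\ast}\in F_k^{\perp}$ gives $x_{k+1}^{\ast}(x_j)=0$ for all $j\le k$.

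For the upper bound the key is a membership claim proved by induction on the number of terms: for every $k\ge 0$, every $m\ge k$ and all signs $\theta_j\in\{-1,1\}$ one has $\sum_{j=k+1}^{m}\theta_j x_j\in D_k$. The base case uses symmetry of $D_k$, and the inductive step uses that the tail $t=\sum_{j=k+2}^{m}\theta_j x_j$ lies in $D_{k+1}$, whence $x_{k+1}\pm t\in D_k$, i.e. $\theta_{k+1}x_{k+1}+t\in D_k$ for either sign. Taking $k=0$ gives $\sum_{j=1}^{m}\theta_j x_j\in D_0$, so this sum has norm $\le\delta_0(D_0)\le\delta_0(A)$. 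Since $[-1,1]^N$ is the convex hull of its vertices and $\|\cdot\|$ is convex, the bound passes to arbitrary coefficients in $[-1,1]$, and homogeneity yields $\bigl\|\sum_{n=1}^{N}\lambda_n x_n\bigr\|\le\delta_0(A)\max_n|\lambda_n|$.

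For the lower bound let $M=\max_n|\lambda_n|=|\lambda_{n_0}|$ and test with $x_{n_0}^{\ast}$. The terms with $j<n_0$ vanish by the orthogonality $x_{n_0}^{\ast}(x_j)=0$, leaving the diagonal term, of modulus $>M\bigl(\delta_{2^{n_0}}(A)-\eta_{n_0-1}-\delta_{n_0}\bigr)$, plus the tail $x_{n_0}^{\ast}\bigl(\sum_{j>n_0}\lambda_j x_j\bigr)$. Controlling this tail is the main obstacle: bounding $\sum_{j>n_0}|\lambda_j|\,|x_{n_0}^{\ast}(x_j)|$ termwise only gives $M\delta_{n_0}(N-n_0)$, which blows up with $N$. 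The decisive point is to avoid summing at all. By the membership claim (with $k=n_0$), $\sum_{j>n_0}\theta_j x_j\in D_{n_0}$ for every sign choice, and observation (II) makes $|x_{n_0}^{\ast}|<\delta_{n_0}$ on all of $D_{n_0}$; convexity in the coefficients then gives $\bigl|x_{n_0}^{\ast}\bigl(\sum_{j>n_0}\lambda_j x_j\bigr)\bigr|\le M\delta_{n_0}$ \emph{uniformly in $N$}. Combining, $\bigl\|\sum_{n=1}^{N}\lambda_n x_n\bigr\|\ge M\bigl(\delta_{2^{n_0}}(A)-\eta_{n_0-1}-2\delta_{n_0}\bigr)$, and since $\delta_{2^{n_0}}(A)\ge\delta_{2^{N}}(A)$ and $\eta_{n_0-1}+2\delta_{n_0}<\epsilon$, this is at least $(\delta_{2^{N}}(A)-\epsilon)\max_n|\lambda_n|$, completing the estimate.
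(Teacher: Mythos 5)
Your proof is correct and follows essentially the same route as the paper: the same two lemmas drive an inductive construction of nested symmetrizations $D_k\in\Delta_{2^k}(A)$ with functionals vanishing on the previous vectors, the sign-sum membership $\sum_{j=k+1}^{m}\theta_j x_j\in D_k$ is exactly the paper's condition (a) made explicit, and the tail is controlled by observation (II) plus convexity just as in the paper. The only point to tighten is the degenerate case: ``continue with arbitrary vectors'' should be read as ``continue with any $x_{k+1}\in D_k$ and the same symmetrization step,'' since the upper bound and the membership claim still require $x_{k+1}\in D_k$ and $D_{k+1}=(D_k-x_{k+1})\cap(x_{k+1}-D_k)$.
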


\begin{proof}
Write $\eta = \epsilon/3$. Fix $x_{0} \in A_{0}:=A$ and put $A_{1}:= (A -x_{0}) \cap (x_{0} - A)$. By Lemma \ref{LEMA:KuratowskiSizeSym} we have that $\alpha(A_{1}) > \delta_{2}(A) - \eta$, so Lemma \ref{LEMA:orthogonal} yields that there are $x_{1} \in A_{1}$ and $x_{1}^{\ast} \in S_{X^{\ast}}$ with 
$x_{1}^{\ast}(x_{1}) >  \sup{(x_{1}^{\ast}, A_{1})} - \eta > \delta_{2}(A) - 2\eta$. Suppose that $N \geq 1$ and we have constructed $(x_{n}^{\ast})_{n=1}^{N}$ in $S_{X^{\ast}}$, $(x_{n})_{n=1}^{N}$ in $\aconv{(A)}$ and $(A_{n})_{n=1}^{N}$ subsets of $X$ satisfying for each $1 \leq n \leq N$:
\begin{itemize}
\item[(a)] $x_{n-1} \pm A_{n} \subset A_{n-1}$ and $A_{n} \in \Delta_{2^{n-1}}(A)$.
\item[(b)] $\{ x_{k}: 1 \leq k < n \} \subset \ker{x_{n}^{\ast}}$.
\item[(c)] $x_{n}^{\ast}(x_{n})  >  \sup{(x_{n}^{\ast}, A_{n})} - \eta > \delta_{2^{n}}(A) - 2\eta$.
\end{itemize}
Put $A_{N+1} := (A_{N} - x_{N}) \cap (x_{N} - A_{N}) \in \Delta_{2^N}(A)$. By Lemma \ref{LEMA:KuratowskiSizeSym} we have that $\alpha(A_{N+1}) > \delta_{2^{N+1}}(A) - \eta_{N+1}$, so using Lemma \ref{LEMA:orthogonal} we obtain $x_{N+1} \in A_{N+1}$ and $x_{N+1}^{\ast} \in S_{X^{\ast}}$ such that $\{ x_{k}: 1 \leq k \leq N \} \subset \ker{x_{N+1}^{\ast}}$ and $x_{N+1}^{\ast}(x_{N+1}) > \sup{(x_{N+1}^{\ast}, A_{N+1})} - \eta > \delta_{2^{N+1}}(A) - 2\eta$. This finishes the inductive construction. Notice that conditions (a) and (c) imply that
\begin{enumerate}
\item[(d)] $|x_{n}^{\ast}(z)| < \eta$ whenever $z \in A_{n+1}$.
\end{enumerate}
Given $N \in \mathbb{N}$, we show now that the sequence $(x_{n})_{n \in \mathbb{N}}$ satisfies \eqref{equa:c0copies}. For every $0 \neq (\lambda_{n})_{n=1}^{N} \in \mathbb{R}^{N}$ we can write
\[ \left\| \sum_{n=1}^{N}{\lambda_{n} x_{n}} \right\| = |\lambda_{m}| \cdot \left\| \sum_{n=1}^{N}{\frac{\lambda_{n}}{\lambda_{m}} x_{n}} \right\| \leq |\lambda_{m}| \cdot \delta_{0}(A) \]
being $m$ such that $|\lambda_{m}| = \max{\{ |\lambda_{n}|: 1 \leq n \leq N \}}$, since $x_{0} + \sum_{n=1}^{N}{\pm x_{n}} \in A$. Furthermore 
\[
\left\| \sum_{n=1}^{N}{\frac{\lambda_{n}}{\lambda_{m}} x_{n}} \right\| \geq \langle x_{m}^{\ast}, x_{m} \rangle + \langle x_{m}^{\ast}, \sum_{m < n \leq N}{\frac{\lambda_{n}}{\lambda_{m}} x_{n}} \rangle \geq \delta_{2^{m}}(A) - 3\eta \geq \delta_{2^{N}}(A) - 3 \eta,
\]
where we have used (b), (c), (d) and the fact that
\[ \sum_{m < n \leq N}{\frac{\lambda_{n}}{\lambda_{m}} x_{n}} \in \conv{(A_{m+1})}, \mbox{ which is a consequence of (a)}. \]
\end{proof}

\begin{Coro}
\label{coro:c0copy}
Let $A \subset X$ be bounded. For every $\epsilon > 0$ there is a sequence in $(x_{n})_{n \in \mathbb{N}}$ in $\aconv{(A)}$ such that
\[ (\delta_{\infty}(A) - \epsilon) \max_{n \in \mathbb{N}}{|\lambda_{n}|} \leq \left\| \sum_{n=1}^{\infty}{\lambda_{n} x_{n}} \right\| \leq \delta_{0}(A) \cdot \max_{n \in \mathbb{N}}{|\lambda_{n}|} \]
for every finitely supported sequence $(\lambda_{n})_{n \in \mathbb{N}}$ in $\mathbb{R}$.
\end{Coro}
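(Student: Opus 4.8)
The plan is to derive the corollary directly from Theorem \ref{theo:c0Copies} by exploiting the monotonicity of the index sequence, so that essentially no new work is required. First I would recall the observation made right after the definition of the indexes: $\{\delta_N(A)\colon N\in\mathbb{N}\}$ is a decreasing sequence, and $\delta_\infty(A) := \lim_N \delta_N(A)$ is therefore its infimum. In particular, $\delta_\infty(A) \leq \delta_{2^N}(A)$ for every $N \in \mathbb{N}$. This is the only quantitative input needed, and it lets me replace the $N$-dependent lower-bound constant $\delta_{2^N}(A) - \epsilon$ appearing in \eqref{equa:c0copies} by the smaller, $N$-independent quantity $\delta_\infty(A) - \epsilon$.

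Next, given $\epsilon > 0$, I would take the sequence $(x_n)_{n\in\mathbb{N}}$ in $\aconv{(A)}$ furnished by Theorem \ref{theo:c0Copies} for this same $\epsilon$, and check that it already works. Let $(\lambda_n)_{n\in\mathbb{N}}$ be an arbitrary finitely supported sequence in $\mathbb{R}$ and choose any $N$ with $\lambda_n = 0$ for all $n > N$. Then $\sum_{n=1}^\infty \lambda_n x_n = \sum_{n=1}^N \lambda_n x_n$ (so no convergence issue arises, the series being a genuine finite sum) and $\max_{n\in\mathbb{N}}|\lambda_n| = \max_{1\leq n\leq N}|\lambda_n|$. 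Applying \eqref{equa:c0copies} for this $N$ and combining it with $\delta_{2^N}(A) \geq \delta_\infty(A)$ gives
\[
(\delta_\infty(A) - \epsilon)\max_{n\in\mathbb{N}}|\lambda_n| \leq (\delta_{2^N}(A) - \epsilon)\max_{1\leq n\leq N}|\lambda_n| \leq \left\| \sum_{n=1}^{N}{\lambda_{n} x_{n}} \right\| \leq \delta_0(A)\max_{1\leq n\leq N}|\lambda_n|,
\]
which is exactly the asserted estimate.

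The only point to verify is that the conclusion is independent of the chosen $N$, which is immediate since enlarging $N$ merely appends zero coefficients and changes none of the three quantities involved. I do not expect any genuine obstacle here: the entire content of the corollary is already packaged inside Theorem \ref{theo:c0Copies}, and the passage from combinations of a fixed length $N$ to arbitrary finitely supported sequences is routine once the monotonicity $\delta_\infty(A) \leq \delta_{2^N}(A)$ is observed. If one preferred a statement with a strict control of the convergence, one could instead invoke the theorem along a sequence $\epsilon_k \downarrow 0$ and pass to a diagonal subsequence, but this refinement is unnecessary for the stated result.
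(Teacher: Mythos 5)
Your proof is correct and matches the paper's (implicit) reasoning: the corollary is stated without proof precisely because it follows at once from Theorem \ref{theo:c0Copies} together with the monotonicity $\delta_{\infty}(A) \leq \delta_{2^{N}}(A)$, exactly as you argue. The reduction of a finitely supported sequence to a finite sum is handled correctly and no further work is needed.
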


\section{Copies of $c_{0}$ in Banach spaces}

\begin{Theo}
\label{theo:c0chacterization}
Let $X$ be a Banach space. The following assertions are equivalent:
\begin{enumerate}
\item[(i)] $c_{0}$ is not isomorphic to a subspace of $X$.
\item[(ii)] $\delta_{\infty}(C) = 0$ for every bounded set $C \subset X$.  
\item[(iii)]  $\delta_{\infty}(C) = 0$ for every bounded, convex and closed set $C \subset X$.
\end{enumerate}
\end{Theo}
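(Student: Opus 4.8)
The plan is to establish the cycle of implications (i) $\Rightarrow$ (ii) $\Rightarrow$ (iii) $\Rightarrow$ (i); the middle implication is immediate, since a bounded, closed, convex set is in particular a bounded set, so (ii) applies to it verbatim. The two substantive implications are (i) $\Rightarrow$ (ii) and (iii) $\Rightarrow$ (i), and I would prove both by contraposition.

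For (i) $\Rightarrow$ (ii) I would assume that some bounded set $C$ has $\delta_{\infty}(C) > 0$ and set $\epsilon := \delta_{\infty}(C)/2$. Applying Corollary \ref{coro:c0copy} with this $\epsilon$ yields a sequence $(x_n)$ in $\aconv(C)$ with
\[ \frac{\delta_{\infty}(C)}{2} \max_{n}|\lambda_n| \leq \Bigl\| \sum_{n} \lambda_n x_n \Bigr\| \leq \delta_0(C) \max_n |\lambda_n| \]
for every finitely supported scalar sequence $(\lambda_n)$. These two-sided estimates are precisely the statement that $(x_n)$ is equivalent to the unit vector basis of $c_0$; hence the closed linear span of $(x_n)$ is isomorphic to $c_0$ and (i) fails. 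This is the direction that directly harvests the quantitative Theorem \ref{theo:c0Copies} proved earlier.

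For (iii) $\Rightarrow$ (i) I would assume $c_0$ embeds into $X$, so that there is a linear isomorphism $T$ from $c_0$ onto a closed subspace $Y = T(c_0)$ with $a\|u\|_{\infty} \le \|Tu\| \le b\|u\|_{\infty}$ for suitable $0 < a \le b$, and consider $C := T(B_{c_0})$, which is bounded, convex, and closed (closed because $T$ is an isomorphism onto the closed subspace $Y$). The first step is to compute $\delta_N(B_{c_0})$: for $u_1,\dots,u_N \in B_{c_0}$ a vector $v$ lies in $\bigcap_{n=1}^{N}(B_{c_0} - u_n)\cap(u_n - B_{c_0})$ exactly when $|v(k)| \le 1 - \max_n |u_n(k)|$ for every coordinate $k$; since each $u_n \in c_0$, the right-hand side tends to $1$ as $k \to \infty$, so this symmetrized set has diameter $2$, giving $\delta_0 = 1$. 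As this holds for every choice of the centers, $\delta_N(B_{c_0}) = 1$ for all $N$. Because $T$ is a linear bijection onto $Y$, the symmetrized set of $C$ with respect to $Tu_1,\dots,Tu_N$ is exactly the image under $T$ of the symmetrized set of $B_{c_0}$ with respect to $u_1,\dots,u_N$; using $\|T(\cdot)\| \ge a\|\cdot\|_{\infty}$ then gives $\delta_N(C) \ge a\,\delta_N(B_{c_0}) = a$, so $\delta_{\infty}(C) \ge a > 0$ and (iii) fails.

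The main obstacle is the computation in (iii) $\Rightarrow$ (i): identifying the symmetrized sets of $B_{c_0}$ coordinatewise and verifying that no choice of centers $u_1,\dots,u_N$ can shrink their diameter below $2$, which rests on the fact that the coordinates of each $u_n$ vanish at infinity. The transfer of the index through the isomorphism $T$ and the recognition that the output of Corollary \ref{coro:c0copy} is exactly a $c_0$-basis estimate are then routine.
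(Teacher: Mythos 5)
Your proposal is correct and follows essentially the same route as the paper: (i)$\Rightarrow$(ii) by applying Corollary \ref{coro:c0copy}, (ii)$\Rightarrow$(iii) trivially, and (iii)$\Rightarrow$(i) by taking $C = T(B_{c_0})$ and exploiting that the coordinates of finitely many elements of $c_0$ vanish at infinity to place a far-out (scaled) basis vector $\pm(1-\epsilon)T(e_m)$ in every symmetrized set. Your explicit coordinatewise identification of the symmetrized sets of $B_{c_0}$ and the transfer through $T$ is just a slightly more detailed write-up of the paper's one-line estimate $\delta_{N}(A) \geq (1-\epsilon)/\|T^{-1}\|$.
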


\begin{proof}
Implication (i)$\Rightarrow$(ii) is a consequence of Corollary \ref{coro:c0copy}, while (ii)$\Rightarrow$(iii) is obvious. We just have to check that (iii)$\Rightarrow$(i). Let $T: c_{0} \rightarrow X$ be an isomorphism, and consider $A := T(B_{c_{0}})$. Given $a_{1}, \ldots, a_{N} \in A$ and $0 < \epsilon < 1$ we can find $m \in \mathbb{N}$ such that $a_{n} \pm (1 - \epsilon) T(e_{m}) \in A$ for every $1 \leq n \leq N$. This shows that $\delta_{N}(A) \geq (1 - \epsilon)/\| T^{-1} \|$ for each $N \in \mathbb{N}$.
\end{proof}

When $c_{0}$ is isomorphic to a subspace of $X$, it is also said that $X$ has a copy of $c_{0}$. It turns out that these spaces have indeed almost isometric copies of $c_{0}$, which means that for every $\epsilon > 0$ we can find a closed subspace $Y \subset X$ and an isomorphism $T: c_{0} \rightarrow Y$ such that $\| T \| \| T^{-1}\| \leq 1+ \epsilon$.

\begin{Theo}[James]
\label{theo:JamesDistortion}
If $X$ has a copy of $c_{0}$, then it has almost isometric copies of $c_{0}$.
\end{Theo}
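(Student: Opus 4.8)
The plan is to reduce the statement to producing, for each prescribed $\epsilon > 0$, a single bounded set $D \subset X$ whose symmetrization indexes satisfy $\delta_{\infty}(D) > 0$ and $\delta_{0}(D) \leq (1+\epsilon)\,\delta_{\infty}(D)$. Once such a $D$ is at hand, Corollary \ref{coro:c0copy} furnishes a sequence $(x_{n})_{n \in \mathbb{N}}$ in $\aconv(D)$ with
\[ (\delta_{\infty}(D) - \epsilon')\max_{n}|\lambda_{n}| \leq \Big\| \sum_{n}{\lambda_{n} x_{n}} \Big\| \leq \delta_{0}(D)\max_{n}|\lambda_{n}| \]
for all finitely supported $(\lambda_{n})$. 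Setting $Y := \overline{\spn}\{x_{n} : n \in \mathbb{N}\}$, the map $T \colon c_{0} \to Y$ determined by $T(e_{n}) = x_{n}$ is well defined and bounded (for $(\lambda_{n}) \in c_{0}$ the partial sums are Cauchy by the upper estimate, since $\lambda_{n} \to 0$), is an isomorphism onto $Y$, and satisfies $\|T\|\,\|T^{-1}\| \leq \delta_{0}(D)/(\delta_{\infty}(D) - \epsilon')$, which can be forced as close to $1$ as we wish.

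The construction of $D$ begins from the hypothesis. Since $X$ contains $c_{0}$, fix an isomorphism $T_{0} \colon c_{0} \to X$ and put $A := T_{0}(B_{c_{0}})$; the computation in the proof of Theorem \ref{theo:c0chacterization} shows that $\delta_{N}(A) \geq 1/\|T_{0}^{-1}\|$ for every $N$, whence $\ell := \delta_{\infty}(A) > 0$. Using that $\delta_{N}(A) = \inf\{\delta_{0}(E) : E \in \Delta_{N}(A)\} \downarrow \ell$, I would fix a small $\eta > 0$, choose $N$ large, and then pick $D \in \Delta_{N}(A)$ with $\delta_{0}(D) < \ell + \eta$.

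The crux is to check that $\delta_{\infty}(D) \geq \ell$, that is, that further symmetrizing $D$ cannot make it small. This follows from an iterate of observation (I): if $D$ is the symmetrized of $A$ with respect to $x_{1}, \ldots, x_{N}$, then for $d_{1}, \ldots, d_{M} \in D$ the set $\bigcap_{j=1}^{M}(D - d_{j}) \cap (d_{j} - D)$ is the symmetrized of $A$ with respect to the family $\{x_{n} \pm d_{j}\}$, hence lies in $\Delta_{2NM}(A)$. Consequently $\Delta_{M}(D) \subseteq \Delta_{2NM}(A)$, so $\delta_{M}(D) \geq \delta_{2NM}(A) \geq \ell$ and therefore $\delta_{\infty}(D) \geq \ell$. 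Combining this with the always-valid bound $\delta_{\infty}(D) \leq \delta_{0}(D) < \ell + \eta$ yields the sandwich $\ell \leq \delta_{\infty}(D) \leq \delta_{0}(D) < \ell + \eta$; since $(1 + \eta/\ell)\,\delta_{\infty}(D) \geq (1 + \eta/\ell)\ell = \ell + \eta > \delta_{0}(D)$, this gives both $\delta_{\infty}(D) > 0$ and $\delta_{0}(D) \leq (1 + \eta/\ell)\,\delta_{\infty}(D)$, which is exactly what the first paragraph requires.

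I expect the main obstacle to be the bookkeeping in the iterated observation (I): verifying that the finitely many symmetrizations of $D$ with respect to points of $D$ are genuinely symmetrizations of the original set $A$ with respect to a controlled (finite) number of points, so that the nesting $\Delta_{M}(D) \subseteq \Delta_{2NM}(A)$ holds and $\delta_{\infty}(D) \geq \ell$ follows. Everything else—the sandwich estimate and the passage to the isomorphism $T$—is routine given Corollary \ref{coro:c0copy} and the monotonicity of the indexes, after choosing $\eta$ and $\epsilon'$ small enough that $(\ell + \eta)/(\ell - \epsilon') \leq 1 + \epsilon$.
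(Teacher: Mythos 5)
Your proposal is correct and follows essentially the same route as the paper: pick $D \in \Delta_{N}(A)$ with $\delta_{0}(D)$ close to $\delta_{\infty}(A)$, observe that $\delta_{\infty}(D) \geq \delta_{\infty}(A)$, and apply Corollary \ref{coro:c0copy} to $D$. The only difference is that you spell out, via the iterated observation (I) and the inclusion $\Delta_{M}(D) \subseteq \Delta_{2NM}(A)$, the inequality $\delta_{\infty}(A) \leq \delta_{\infty}(D)$ that the paper asserts without comment; that verification is sound.
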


\begin{proof}
If $c_{0}$ embedds into $X$ then there exists a bounded set $A \subset X$ with $\delta_{\infty}(A) > 0$ by Theorem \ref{theo:c0chacterization}. It follows from the definition of $\delta_{\infty}(A)$ that for every $\epsilon > 0$ there exists an element $D \in \bigcup_{N \in \mathbb{N}}{\Delta_{N}(A)}$ such that 
\[ \delta_{\infty}(A) \leq \delta_{0}(D) \leq (1 + \epsilon)\delta_{\infty}(A). \]
Since $\delta_{\infty}(A) \leq \delta_{\infty}(D)$, we deduce that $\delta_{0}(D) \leq (1 + \epsilon) \delta_{\infty}(D)$, so an application of Corollary \ref{coro:c0copy} with $D$ leads to the desired copy of $c_{0}$.
\end{proof}

Another easy consequence is the Bessaga-Pe\l{}zcynski criterion for the existence of copies of $c_{0}$. Recall that a series $\sum_{n}{x_{n}}$ in a Banach space $X$ is said to be wuC if $\sum_{n}{|x^{\ast}(x_{n})|}$ converges for every $x^{\ast} \in X^{\ast}$, which by the Uniform Boundedness Principle implies that $\sum_{n}{|x^{\ast}(x_{n})|}$ is uniformly bounded for $x^{\ast} \in B_{X^{\ast}}$.

\begin{Theo}[Bessaga-Pe\l{}czynski]
\label{theo:BessagaPelczynski}
If $c_{0} \nsubseteq X$ and $\sum_{n}{x_{n}}$ is wuC, then the series is unconditionally convergent.
\end{Theo}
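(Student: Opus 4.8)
The plan is to deduce Bessaga–Pełczyński from the characterization theorem (Theorem \ref{theo:c0chacterization}) together with the standard contrapositive reduction: a wuC series that is not unconditionally convergent must generate a copy of $c_0$. Since the deep work is already done in Theorem \ref{theo:c0chacterization}, the role of this proof is to connect the failure of unconditional convergence to a bounded set $A$ with $\delta_\infty(A) > 0$.

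**First I would** recall the elementary reformulation of unconditional convergence: a series $\sum_n x_n$ is unconditionally convergent if and only if for every $\epsilon > 0$ there is $N$ such that $\bigl\| \sum_{n \in F} x_n \bigr\| < \epsilon$ for every finite set $F \subset \{N, N+1, \ldots\}$. **So I would argue by contradiction:** assume the series is wuC but \emph{not} unconditionally convergent. The negation gives an $\epsilon_0 > 0$ and a sequence of finite sets $F_1 < F_2 < \cdots$ (pairwise disjoint, with $\max F_k < \min F_{k+1}$) such that the block vectors $y_k := \sum_{n \in F_k} x_n$ satisfy $\| y_k \| \geq \epsilon_0$ for all $k$.

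**The key step** is then to show that the set $A := \{ \sum_{k \in G} \pm y_k : G \subset \mathbb{N} \text{ finite} \}$, or more conveniently the symmetric set of all finite signed block-sums, is bounded with $\delta_\infty(A) > 0$. Boundedness is exactly where the wuC hypothesis enters: by the Uniform Boundedness Principle reformulation recalled in the excerpt, $M := \sup_{x^\ast \in B_{X^\ast}} \sum_n |x^\ast(x_n)|$ is finite, and hence for any finite $G$ and any signs one has $\bigl\| \sum_{k \in G} \pm y_k \bigr\| = \sup_{x^\ast \in B_{X^\ast}} \bigl| \sum_{k \in G} \pm x^\ast(y_k) \bigr| \leq M$, so $A \subset M B_X$. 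For the lower bound on the index, I would fix finitely many points of $A$ and, using that the $y_k$ are supported on disjoint tail blocks, find a block $y_m$ with large index $m$ whose support avoids all the supports appearing in those chosen points; then each chosen point $a_j$ satisfies $a_j \pm y_m \in A$, which forces $\delta_N(A) \geq \epsilon_0/2$ for every $N$, exactly as in the proof of (iii)$\Rightarrow$(i) in Theorem \ref{theo:c0chacterization}. This yields $\delta_\infty(A) \geq \epsilon_0/2 > 0$, so by Theorem \ref{theo:c0chacterization} the space contains a copy of $c_0$, contradicting the hypothesis $c_0 \nsubseteq X$.

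**The main obstacle** I anticipate is making the "disjoint support" argument precise without invoking a basis, since the whole point of the paper is to avoid basic-sequence techniques. The vectors $y_k$ do not literally live in disjoint coordinate blocks of $X$; the disjointness is only at the level of the index sets $F_k \subset \mathbb{N}$ labeling the original series terms. The clean way around this is to observe that a finite collection $a_1, \ldots, a_N \in A$ uses only finitely many blocks, say all contained in $y_1, \ldots, y_{K}$; choosing $m > K$ then guarantees $y_m$ is "new," and the defining property $a_j \pm y_m \in A$ holds by simply adjoining $\pm y_m$ to the signed sum representing $a_j$. This is purely combinatorial on the index sets and requires no linear independence or biorthogonality, so it stays within the geometric framework of the paper.
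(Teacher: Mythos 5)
Your proposal is correct and follows essentially the same route as the paper: both form the bounded set of finite signed sums built from the wuC series (boundedness coming from the Uniform Boundedness Principle) and then invoke Theorem \ref{theo:c0chacterization} on the index $\delta_{\infty}$. The only cosmetic difference is that the paper argues directly with signed initial partial sums $\sum_{n=1}^{m}\theta_{n}x_{n}$ and reads off the Cauchy condition for the tails, whereas you argue by contradiction with disjoint blocks $y_{k}$; if anything, allowing arbitrary finite index sets $G$ as you do makes the verification that $a_{j}\pm y_{m}\in A$ slightly cleaner.
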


\begin{proof}
Consider the uniformly bounded sets given by
\[ A_{m}= \left\{ \sum_{n=1}^{m}{\theta_{n} x_{n}}: \theta_{n} \in \{ -1,1\} \mbox{ for each } 1 \leq n \leq m \right\}, \hspace{4mm} A = \bigcup_{m \in \mathbb{N}}{A_{m}}. \]
If $X$ does not contain a copy of $c_{0}$, then $\delta_{\infty}(A) =0$, so given $\epsilon > 0$ we can find $a_{1}, \ldots, a_{N} \in A$ with
\[ \diam\left( \bigcap_{j=1}^{N}{(A - a_{j}) \cap (a_{j} - A)} \right) < \epsilon. \]
There is $M \in \mathbb{N}$ such that $\{ a_{n}: 1 \leq n \leq N\} \subset \bigcup_{m \leq M}{A_{m}}$, so $\left\| \sum_{n=M}^{M'}{\theta_{n} x_{n}} \right\| \leq \epsilon$ for every $\theta_{n} \in \{ -1,1\}$ and $M' \geq M$.
\end{proof}

We finish with a non-symmetrized characterization of Banach spaces with no copies of $c_{0}$.

\begin{Prop}
A Banach space $X$ does not contain an isomorphic copy of $c_{0}$ if and only if for every bounded set $A \subset X$ and each $\epsilon > 0$ there are $x_{1}, \ldots, x_{N} \in A$ such that
\[ \diam\left(\bigcap_{j=1}^{N}{(A - x_{j})}\right) < \epsilon. \]
\end{Prop}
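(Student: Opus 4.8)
The plan is to prove both implications by relating the non-symmetrized intersections $\bigcap_{j=1}^{N}(A-x_j)$ to the symmetrized ones $\bigcap_{j=1}^{N}(A-x_j)\cap(x_j-A)$ already controlled by $\delta_N(A)$, and then to invoke Theorem \ref{theo:c0chacterization}. The easy direction is that the stated non-symmetrized condition implies $c_0\nsubseteq X$: since each symmetrized set $\bigcap_{j=1}^{N}(A-x_j)\cap(x_j-A)$ is contained in the corresponding one-sided intersection $\bigcap_{j=1}^{N}(A-x_j)$, a small diameter for the latter forces a small diameter for the former, hence $\delta_\infty(A)=0$ for every bounded $A$; by the implication (ii)$\Rightarrow$(i) of Theorem \ref{theo:c0chacterization} we conclude $c_0\nsubseteq X$.

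The substantial direction is the converse. Assuming $c_0\nsubseteq X$, Theorem \ref{theo:c0chacterization} gives $\delta_\infty(A)=0$ for every bounded $A$, so for any $\epsilon>0$ there exist points $y_1,\dots,y_M\in A$ with $\diam\bigl(\bigcap_{j=1}^{M}(A-y_j)\cap(y_j-A)\bigr)<\epsilon/2$. The gap to close is that this controls a \emph{two-sided} intersection, whereas we need the \emph{one-sided} set $\bigcap(A-x_j)$ to be small. The key idea is that a one-sided intersection can be turned into a two-sided one by \emph{doubling} the point set: if $D:=\bigcap_{j=1}^{M}(A-y_j)\cap(y_j-A)$ is the symmetrized of $A$ with respect to $y_1,\dots,y_M$, then by observation (I) its further symmetrization $(D-d)\cap(d-D)$ about a point $d\in D$ is again a symmetrized set of $A$ (lying in $\Delta_{2M}(A)$), and I would exploit the translation structure to realize the two-sided set as an intersection of one-sided translates.

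The cleanest route I would pursue is the following. Fix a symmetrized set $D=\bigcap_{j=1}^{M}(A-y_j)\cap(y_j-A)$ with $\diam(D)<\epsilon$, and recall from the remark after the definition that $d\in(A-y_j)\cap(y_j-A)$ is equivalent to $y_j\pm d\in A$. Thus $D$ can be rewritten as $D=\bigl(\bigcap_{j=1}^{M}(A-y_j)\bigr)\cap\bigl(\bigcap_{j=1}^{M}(y_j-A)\bigr)$, and since $y_j-A=-(A-y_j)$ the second block equals $-\bigcap_{j=1}^{M}(A-y_j)$. Writing $E:=\bigcap_{j=1}^{M}(A-y_j)$, we have $D=E\cap(-E)$, a symmetric set of diameter $<\epsilon$, but $E$ itself need not be small. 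To force $E$ small I would \emph{enlarge the point family}: given any $w\in E$, the set $E-w=\bigcap_{j=1}^{M}(A-(y_j+w))$ is again a one-sided intersection (note $y_j+w\in A$ precisely because $w\in E$ means $y_j+w\in A$ for all $j$). Taking $x_1,\dots,x_N$ to be the original $y_1,\dots,y_M$ together with the shifted family $y_1+w,\dots,y_M+w$ for a suitable reference point $w\in E$, the combined one-sided intersection $\bigcap_{j}(A-x_j)$ sits inside $E\cap(E-w)=E\cap(E\cap(-E)+\text{correction})$, which I would arrange to be contained in a translate of $D=E\cap(-E)$; since any two points of such a set differ by a difference of two elements of $E$ that both lie in $w+(-E)$, their difference lies in $E-E\subset$ a set of diameter governed by $\diam(D)$. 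I expect the main obstacle to be exactly this bookkeeping: verifying that the doubled point family $\{y_j\}\cup\{y_j+w\}$ lies in $A$ and that its one-sided intersection is genuinely contained in a set of diameter $O(\epsilon)$, using only $w\in E$ and $\diam(E\cap(-E))<\epsilon$. Once that containment is established, setting $N=2M$ and renaming the points finishes the proof.
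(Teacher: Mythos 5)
Your first direction is fine and is exactly the paper's argument: the symmetrized set $\bigcap_{j}(A-x_j)\cap(x_j-A)$ is contained in $\bigcap_j(A-x_j)$, so the hypothesis forces $\delta_\infty(A)=0$ for every bounded $A$ and Theorem \ref{theo:c0chacterization} applies.

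The converse, however, has a genuine gap, and it is not mere bookkeeping. Your plan rests on the claim that if $E:=\bigcap_{j=1}^{M}(A-y_j)$ and $D=E\cap(-E)$ has diameter $<\epsilon$, then for a suitable $w\in E$ the set $E\cap(E-w)$ has diameter $O(\epsilon)$, justified by saying that differences of elements of $E\cap(E-w)$ lie in ``$E-E$, a set of diameter governed by $\diam(D)$''. This is false: $\diam(E-E)=2\,\diam(E)$ is not controlled by $\diam(E\cap(-E))$ at all. Already for $X=\mathbb{R}$, $A=[0,1]$, $M=1$, $y_1=0$ one has $E=[0,1]$, $D=E\cap(-E)=\{0\}$ (diameter $0$), yet $E-E=[-1,1]$ and $E\cap(E-w)=[0,1-w]$, which is large for a generic $w\in E$. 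A single doubling therefore does not shrink the one-sided intersection, and iterating the doubling gives no termination guarantee from $\delta_\infty(A)=0$ alone; the passage from two-sided to one-sided smallness is the whole difficulty of this direction and cannot be obtained by this containment. (Choosing an ``extremal'' $w$ rescues the toy example, but nothing of the sort is available in general in infinite dimensions, and you give no mechanism for selecting it.)

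The paper takes a different and indirect route for this implication: it argues by contraposition. If some bounded $A$ and $\epsilon>0$ are such that every finite one-sided intersection $\bigcap_{x\in A_n}(A-x)$ has diameter $\geq\epsilon$, one inductively extracts vectors $x_{n+1}\in\bigcap_{x\in A_n}(A-x)$ with $\|x_{n+1}\|\geq\epsilon$ and doubling sets $A_{n+1}=A_n\cup(A_n+x_{n+1})\subset A$; then every signed sum $\sum_{n=1}^{N}\theta_n x_n$ is a difference of two elements of $A$, so $\sum_n x_n$ is weakly unconditionally Cauchy but not unconditionally convergent, and Theorem \ref{theo:BessagaPelczynski} forces $c_0\subset X$. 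If you want to salvage your direct approach you would need an entirely new idea to control one-sided intersections; as written, the second half of your proof does not go through.
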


\begin{proof}
The sufficiency of the condition is consequence of Theorem \ref{theo:c0chacterization}. To see the converse, assume that there exists $A \subset X$ and $\epsilon > 0$ such that any intersection like in the statement has diameter greater or equal than $\epsilon$. Fix an arbitrary $x_{0} \in A$ and then pick $x_{1} \in (A - x_{0})$ such that $\| x_{1}\| \geq \varepsilon$. Consider the set $A_{1} := \{ x_{0}, x_{0} +x_{1}\} \subset A$. Now we take
\[ x_{2} \in \bigcap_{x \in A_{1}}{(A - x)} \, \mbox{ with $\| x_{2} \| \geq \varepsilon$} \mbox{ \hspace{3mm} and \hspace{3mm} } A_{2} := A_{1} \cup (A_{1} + x_{2}). \]
Following in this way, we will have a sequence $(x_{n})_{n \in \mathbb{N}}$ of vectors of norm greater or equal to $\epsilon$ for $n \geq 1$ and sets $A_{n} \subset A$ of cardinality $2^{n}$. Then consider
\[ x_{n+1} \in \bigcap_{x \in A_{n}}{(A - x)} \, \mbox{ with $\| x_{n+1} \| \geq \varepsilon$} \mbox{ \hspace{3mm} and \hspace{3mm} } A_{n+1}:=A_{n} \cup (x_{n} + A_{n}). \] 
Notice that the sums $\sum_{n=1}^{N}{\theta_{n} x_{n}}$ are uniformly bounded independently of $N$ and the choice of $\theta_{n} \in \{ -1,1\}$, since they are difference of two elements of $A_{N} \subset A$. Now Theorem \ref{theo:BessagaPelczynski} implies that $X$ contains a copy of $c_{0}$.
\end{proof}

\section{Remarks}

Let $A$ be a subset of $X$. Recall that an $\epsilon$-tree in $A$ is a a sequence $\{ x_{n}\colon n \in \mathbb{N}\}$ such that $x_{n} = (x_{2n} + x_{2n+1})/2$ and $\| x_{2n} - x_{2n+1}\| \geq \epsilon$ for every $n \in \mathbb{N}$. The index $\delta_{1}(A)$ is directly related to existence of $\epsilon$-trees inside $A$. In fact, if $\delta_{1}(A) > \epsilon$, then we can construct a $2\epsilon$-tree inside of $A$ in the following way: fix any $x_{1} \in A$. Since $\diam((A - x_{1}) \cap (x_{1} - A)) > 2 \epsilon$, we can find $u_{1} \in X$ such that $\| u_{1}\| \geq \varepsilon$ and $x_{1} \pm u_{1} \in A$. Put $x_{2} := x_{1} - u_{1}$ and $x_{3}:=x_{1} + u_{1}$. Repeating this process with $x_{2}, x_{3}$ and the subsequent constructed points, we obtain the desired $2 \epsilon$-tree. On the other hand, it is clear that every $\epsilon$-tree $A'$ satisfies that $\delta_{1}(A') \geq \epsilon/2$. As a consequence, we can conclude that a set $A \subset X$ contains no $\epsilon$-trees (for any $\epsilon > 0$) if and only if $\delta_{1}(A') = 0$ for each $A' \subset A$. In particular, if $C$ is a closed and convex set having the Radon-Nikod\'{y}m Property (RNP), then $\delta_{1}(A) = 0$ for every $A \subset C$.

We say that $x_{0} \in A$ is an \emph{$\epsilon$-extreme point} of $A$ if $\diam((A - x_{0}) \cap (x_{0} - A))$ is less than $2\epsilon$. It is not difficult to see that $x_{0}$ is an extreme point of $A$ if and only if it is $\epsilon$-extreme for every $\epsilon > 0$. As a consequence, if $K \subset X$ is a bounded, closed and convex set having the Krein-Milman Property (KMP), then $\delta_{1}(C) = 0$ for every closed and convex set $C \subset K$. 

The previous notion reminds of the following concept introduced by Kunen and Rosenthal \cite{MartingaleProofs}: $x_{0} \in A$ is an \emph{$\epsilon$-strong extreme point} of $A$ if there is $\delta> 0$ such that whenever $a_{1}, a_{2} \in A$ and there exists a point $u=\lambda a_{1} + (1 - \lambda) a_{2}$ ($0 < \lambda < 1$) with $\| x_{0} - u\| < \delta$, then $\| u - a_{1}\| < \epsilon$ or $\|u - a_{2}\| < \epsilon$. If $x_{0}$ is $\epsilon$-strong extreme for every $\epsilon > 0$, then we simply say that it is a \emph{strong extreme point}. It is not difficult to see that every $\epsilon$-strong extreme point of $A$ is an $\epsilon$-extreme point of the same set. The converse is not true, since as it is pointed out in \cite[Remark 3, p. 173]{MartingaleProofs} every strong extreme point of a bounded, closed and convex set is also an extreme point of its $\sigma(X^{\ast \ast}, X^{\ast})$-closure (in the terminology of \cite{PreservedPoints} we might say that these are \emph{preserved extreme points}), while there are, for instance, Banach spaces where $B_{X}$ has extreme points that are not extreme points of $B_{X^{\ast \ast}}$ (see \cite{PreservedPoints}). With this formulation we have that if $K$ is a bounded, closed and convex set such that every $A \subset K$ has $\epsilon$-extreme points for every $\epsilon > 0$ (i.e. $\delta_{1}(A) = 0$), then each closed and convex set $C \subset K$ has an $\epsilon$-strong extreme point for every $\epsilon > 0$ (see \cite[Proposition 3.2, p. 170]{MartingaleProofs}).

%%%%%%%%%%%%%%%%%%%%%%%%%%%%%%%%%%%%%%%%%%%%%%%%%%%%%%%%%%%%%%%%%%%%%%%%%%%%%%%%%%%%%%%%%%%%%%%

\end{document}